\newtheorem{theorem}{Theorem}[section]
\newtheorem*{theorem*}{Theorem}
\newtheorem{lemma}[theorem]{Lemma}
\newtheorem*{lemma*}{Lemma}
\newtheorem{proposition}[theorem]{Proposition}
\newtheorem*{proposition*}{Proposition}
\newtheorem{corollary}[theorem]{Corollary}
\newtheorem*{corollary*}{Corollary}
\theoremstyle{definition}
\newtheorem{definition}[theorem]{Definition}
\newtheorem*{definition*}{Definition}
\newtheorem{example}[theorem]{Example}
\newtheorem*{example*}{Example}
\newtheorem*{ques*}{Question}
\newtheorem*{claim*}{Claim}
\newtheorem*{remark*}{Remark}
\newcommand{\ii}{\item}
\title[Avoiding Incr.\ Subseq.\ in Block-Ascending Permutations]
{Schur-Concavity for Avoidance of Increasing Subsequences in Block-Ascending Permutations}
\date{\today}
\author{Evan Chen}
\address{Department of Mathematics, Massachusetts Institute of Technology}
\email{evanchen@mit.edu}
\subjclass[2010]{05A05, 05A19}
\keywords{pattern avoidance, Young tableaux}
\newcommand{\LL}{\mathcal L}
\newcommand{\DD}{\mathcal D}
\newcommand{\MV}{\mathbf V}
\newcommand{\MW}{\mathbf W}
\newcommand{\MWto}{\xrightarrow{\MW}}
\newcommand{\MVfrom}{\xleftarrow{\MV}}
\begin{document}

\begin{abstract}
	For integers $a_1, \dots, a_n \ge 0$ and $k \ge 1$,
	let $\mathcal L_{k+2}(a_1, \dots, a_n)$ denote the set of
	permutations of $\{1, \dots, a_1+\dots+a_n\}$
	whose descent set is contained in
	$\{a_1, a_1+a_2, \dots, a_1+\dots+a_{n-1}\}$,
	and which avoids the pattern $12\dots(k+2)$.
	We exhibit some bijections between such sets,
	most notably showing that $\# \mathcal L_{k+2} (a_1, \dots, a_n)$
	is symmetric in the $a_i$ and is in fact Schur-concave.
	This generalizes a set of equivalences observed by Mei and Wang.
\end{abstract}

\maketitle

\section{Introduction}
\subsection{Synopsis}
For nonnegative integers $a_1$, \dots, $a_n$
an \emph{$(a_1, \dots, a_n)$-ascending permutation} is a permutation on
$\{1, 2, \dots, a_1 + \dots + a_n\}$ whose descent set is contained in
$\{a_1, a_1+a_2, \dots, a_1+\dots+a_{n-1}\}$.
In other words the permutation ascends in blocks of
length $a_1$, $a_2$, \dots, $a_n$,
and thus has the form
\[ \pi = \pi_{11} \dots \pi_{1a_1} \mid
	\pi_{21} \dots \pi_{2a_2} \mid \dots
	\mid \pi_{n1} \dots \pi_{na_n} \]
for which $\pi_{i1} < \pi_{i2} < \dots < \pi_{ia_i}$ for all $i$.
(The $\mid$ separators are added between blocks for readability.)
These permutations were introduced at least as early as 1993,
when Gessel and Reutenauer \cite{5Gessel} exhibited a bijection
between such permutations and so-called \emph{ornaments},
preserving the cycle structure of $\pi$.
Their work was then extended by others \cite{4EFW,6HanXin,steinhardt}.

In this paper we study such permutations,
but focusing on pattern avoidance rather than cycle structure.
\begin{definition}
	Let $a_1$, \dots, $a_n$ be nonnegative integers.
	\begin{itemize}
	\ii Let $\LL_{k+2}(a_1, \dots, a_n)$ denote the set of
	$(a_1, \dots, a_n)$-ascending permutations
	which avoid the pattern $12 \dots (k+2)$.
	In particular, $\LL_{k+2}(a_1 ,\dots, a_n) = \varnothing$
	if $\max \{a_1, \dots, a_n\} \ge k+2$.
	(The use of $k+2$ here is for consistency with
	\cite{lewis} and \cite{meiwang}.)
	\ii Let $\DD_h(a_1, \dots, a_n)$ denote the set of
	$(a_1, \dots, a_n)$-ascending permutations
	which avoid $12\dots(h+1)$ but not $12\dots h$,
	that is, the longest increasing subsequence
	should have length exactly equal to $h$.
	In other words, 
	\[ \DD_h(a_1, \dots, a_n)
		= \LL_{h+1}(a_1, \dots, a_n) \setminus \LL_h(a_1, \dots, a_n). \]
	\end{itemize}
\end{definition}

Many special cases of $\LL_{k+2}(a_1, \dots, a_n)$ are well-studied.
For example,
\begin{itemize}
	\ii $\LL_{3}(1, \dots, 1)$ is the
	set of $123$-avoiding permutations on $\{1, \dots, n\}$, and
	\ii $\LL_3(2, \dots, 2)$ is the set of alternating or
	``zig-zag'' permutations on $\{1, \dots, 2n\}$
	which avoid $123$.
\end{itemize}
Both have cardinality equal to the $n$th Catalan number.

In 2011, Lewis \cite[Proposition 3.1, Theorem 4.1]{lewis}
generalized these results to give two bijections:
\begin{itemize}
	\ii $\LL_{k+2} (k, \dots, k)$ 
	to standard Young tableaux of shape $\left< (k+1)^n \right>$, and
	\ii $\LL_{k+2} (k+1, \dots, k+1)$
	to standard Young tableaux of shape $\left< (k+1)^n \right>$.
\end{itemize}
His proof uses a modified version of the Robinson-Schensted-Knuth correspondence;
the hook-length formula then lets us compute the cardinalities.

In 2017, Mei and Wang \cite{meiwang}
generalized Lewis's bijections to the $2^n$ sets of the form
\begin{equation}
	\LL_{k+2}(a_1, \dots, a_n) \qquad\text{where}\qquad a_i \in \{k, k+1\}.
	\label{eq:yuanti}
\end{equation}
Thus this cardinality of such sets does not depend on the choice of
which $a_i$ are equal to $k$ or $k+1$ \cite[Theorem 2.3]{meiwang}.
Mei and Wang then proposed the problem of finding a direct bijection
between these sets of permutations, without appealing to the RSK correspondence
\cite[Problem 4.2]{meiwang}.\footnote{Actually,
	in the statement of Problem 4.2 in E-JC 24(1) 2017,
	there is a benign typo:
	$S_{nk}(123)$ should be replaced by just $\LL(n;k;\emptyset)$
	(which corresponds to $\LL_{k+2}(k,\dots,k)$ in our notation).
	In any case, our approach does not treat
	$\LL_{k+2}(k,\dots,k)$ specially.}

\subsection{Statement of Results}
The two major results we will prove are:
\begin{theorem}
	\label{thm:Dswap_main}
	For each $h$, the cardinality of $\DD_h(a_1, \dots, a_n)$
	does not depend on the order of the $a_i$'s,
	and there is an explicit bijection between the sets.
\end{theorem}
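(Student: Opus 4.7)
The plan is to reduce to the case of an adjacent transposition. Since adjacent transpositions generate the symmetric group on $\{1,\dots,n\}$, it suffices to produce, for each $i$, an explicit bijection
\[ \DD_h(a_1,\dots,a_i,a_{i+1},\dots,a_n) \longleftrightarrow \DD_h(a_1,\dots,a_{i+1},a_i,\dots,a_n) \]
acting only on the $i$-th and $(i+1)$-th blocks; composing these realizes any permutation of the block sizes.

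The cleanest realization passes through RSK. For a permutation $\pi$ of $\{1,\dots,N\}$, let $(P(\pi),Q(\pi))$ be its RSK image, of common shape $\lambda$; recall that $\lambda_1$ equals the length of the longest increasing subsequence of $\pi$ and that the descent set of $\pi$ equals the descent set of $Q(\pi)$. Containment of descents in $\{a_1, a_1+a_2, \dots\}$ is classically equivalent to $Q$ being the standardization of a unique semistandard Young tableau $T$ of shape $\lambda$ and content $(a_1,\dots,a_n)$. Hence
\[ \DD_h(a_1,\dots,a_n) \;\longleftrightarrow\; \bigsqcup_{\lambda:\lambda_1=h} \mathrm{SYT}(\lambda) \times \mathrm{SSYT}(\lambda;\,(a_1,\dots,a_n)). \]
To swap $a_i \leftrightarrow a_{i+1}$, keep $P$ and $\lambda$ fixed and apply the Bender--Knuth involution at index $i$ on $T$: this is an explicit involution on SSYT of shape $\lambda$ that exchanges the counts of $i$'s and $(i+1)$'s. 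Because Bender--Knuth preserves shape, $h=\lambda_1$ is preserved, so $\DD_h$ is sent to $\DD_h$.

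The main obstacle will be translating this back to a truly ``direct'' operation on permutations, as Mei and Wang had requested. A natural candidate is a localized reverse-complement: replace the ordered pair $(B_i, B_{i+1})$ with $(\rho(B_{i+1}), \rho(B_i))$, where $\rho$ is the order-reversing involution on $S = B_i \cup B_{i+1}$. For $n=2$ this is the classical reverse-complement $\pi \mapsto \ol{\pi^R}$, which manifestly preserves LIS length. For $n > 2$ small examples suggest LIS is still preserved, but a clean proof seems subtle since LIS is global while the operation is local: one would parameterize each LIS by ``boundary values'' $u,w$ from the untouched blocks, and observe that although the restricted middle contribution on $(B_i \cup B_{i+1}) \cap (u,w)$ can change, the overall maximum is preserved due to compensating shifts in the optimal choice of $u,w$.
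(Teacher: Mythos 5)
Your argument is correct, but it takes a genuinely different route from the paper. The paper also reduces to adjacent transpositions of block sizes, but it realizes each swap by an elementary local move: a bijection $\DD_h(p,q)\MWto\DD_h(p+1,q-1)$ that relocates a single element of the second block into the first (chosen via the extremal index $\omega_h$ of a longest increasing subsequence straddling the two blocks), iterated until $(p,q)$ becomes $(q,p)$; the technical heart is a structure-preservation lemma (Lemma~\ref{lem:h_preserve}) showing that $\MW$ preserves the lengths of increasing subsequences supported on any value interval $[a,b]$, which is exactly what makes the local move on blocks $\ell$ and $\ell+1$ compatible with the untouched blocks. Your RSK-plus-Bender--Knuth composite is a complete and valid proof of Theorem~\ref{thm:Dswap_main} as stated (and it exposes the underlying reason for the symmetry, namely the symmetry of the Kostka numbers $K_{\lambda\alpha}$), but two caveats are worth noting. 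First, it is not local in the sense your opening paragraph promises --- passing through RSK and back changes the permutation globally --- which is harmless for the theorem itself but runs against the paper's stated purpose: Mei and Wang's Problem~4.2 asks for a bijection that does \emph{not} appeal to RSK, and avoiding RSK is the main point of the construction here. Second, the paper's maps $\MW$ and $\MV$ also deliver the Schur-concavity injection of Theorem~\ref{thm:karamata} with essentially no additional work, whereas your route would need a separate monotonicity argument for Kostka numbers under dominance. Your proposed localized reverse-complement is left unproved and is not needed for the theorem; if you want a genuinely local move on the permutations themselves, the paper's $\MW$ plays exactly that role.
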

\begin{theorem}
	\label{thm:karamata}
	Fix $h$, and suppose the sequence $a_1 \ge \dots \ge a_n$
	majorizes the sequence $b_1 \ge \dots \ge b_n$.
	Then there is an explicit injection
	\[ \#\DD_h\left( a_1, a_2, \dots, a_n \right)
		\hookrightarrow \#\DD_h\left( b_1, b_2, \dots, b_n \right). \]
\end{theorem}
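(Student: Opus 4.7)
The plan is to combine Theorem~\ref{thm:Dswap_main} (which handles coordinate permutations) with the Muirhead--Hardy--Littlewood--P\'olya reduction of majorization to a chain of unit transfers. Explicitly, whenever $(a_1, \dots, a_n)$ majorizes $(b_1, \dots, b_n)$, one can pass from $a$ to $b$ by a finite chain of elementary ``Robin Hood'' steps $(\dots, c, \dots, d, \dots) \mapsto (\dots, c-1, \dots, d+1, \dots)$ with $c \ge d+2$. Composing the resulting injections yields the desired injection $\DD_h(a) \hookrightarrow \DD_h(b)$. Using Theorem~\ref{thm:Dswap_main} to reorder coordinates at each stage, the problem reduces to producing, for $a \ge b+2$, an explicit injection
\[ \DD_h(a, b, a_3, \dots, a_n) \hookrightarrow \DD_h(a-1, b+1, a_3, \dots, a_n). \]

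The natural idea is to pull a single entry out of block~$1$ and reinsert it into block~$2$ at the unique position that keeps both blocks ascending, taking care to choose the entry by a rule that can be read off from the output. The naive choice ``remove $\pi_{1a}$'' already fails to stay within $\DD_h$: starting from $1\,2\,4 \mid 3 \in \DD_3(3,1)$, it produces $1\,2 \mid 3\,4$, whose longest increasing subsequence has length~$4$, placing it outside $\DD_3(2,2)$. The right rule should be a matching or parenthesization between block~$1$ and block~$2$ --- morally the crystal operator $\tilde f_1$ on semistandard tableaux of content $(a, b, a_3, \dots)$ --- which picks out a distinguished movable entry of block~$1$, with inverse given by the mirror-image operator $\tilde e_1$ applied to the output.

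The main obstacle will be verifying that this local modification indeed lands in $\DD_h$, which demands preserving both the upper bound (no $12\cdots(h+1)$ pattern is created) and the lower bound (a $12\cdots h$ pattern survives). The upper bound is the delicate one, since relocating an entry into block~$2$ can create new ascents that extend into later blocks. Verifying the invariant will likely require a direct argument tracking how a longest increasing subsequence interacts with the chosen entry, carried out in the language of the $\pi_{ij}$ rather than through RSK (in keeping with the paper's stated preference). Once the map is shown to respect both patterns, injectivity follows from the invertibility of the matching rule.
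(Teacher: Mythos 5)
Your overall architecture is exactly the paper's: reduce majorization to a chain of unit ``Robin Hood'' transfers between two coordinates (made adjacent via Theorem~\ref{thm:Dswap_main}), and for each transfer exhibit an injection $\DD_h(\dots,a_\ell,a_{\ell+1},\dots) \hookrightarrow \DD_h(\dots,a_\ell+1,a_{\ell+1}-1,\dots)$ when the two entries differ by at least $2$. Your counterexample showing that naively moving the boundary entry of the larger block fails is correct, and your instinct that the right rule is a bracketing/matching between the two blocks is on target: the paper's map $\MW$ (Definition~\ref{def:W}), which relocates $y_{h-j}$ for $j=\omega_h(\pi)$ the \emph{largest} index with $x_1 < \dots < x_j < y_{h-j} < \dots < y_1$, is exactly such a rule, with inverse $\MV$ defined via the smallest such index.

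The genuine gap is that the proposal stops where the actual work begins. You never define the selection rule, never verify that the image stays in $\DD_h$ (the paper does this by showing that a length-$(h+1)$ increasing subsequence in $\MW(\pi)$ would force a length-$h$ subsequence in $\pi$ contradicting the maximality of $\omega_h(\pi)$), never prove invertibility, and---most importantly---never handle the interaction with the remaining blocks $a_3,\dots,a_n$, which you correctly identify as ``the delicate one'' but then defer (``will likely require a direct argument''). The paper's resolution of that last point is Lemma~\ref{lem:h_preserve}: $\MW$ preserves, for every interval of values $[a,b]$, the maximal length of an increasing subsequence supported on $[a,b]$. This is the statement that makes the two-block surgery compatible with the global permutation, via the decomposition into the sets $X_r$ and $Y_r$ in Theorems~\ref{thm:adjswap} and~\ref{thm:adjconcave} (and it is also where the injectivity-without-surjectivity becomes precise: the target decomposition acquires an extra part $Y_{a_{\ell+1}-1}$ that the map need not hit). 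Until you supply the explicit rule and these verifications, what you have is a correct plan rather than a proof.
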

(Recall that a sequence $a_1 \ge \dots \ge a_n$
\emph{majorizes} a sequence $b_1 \ge \dots \ge b_n$
if $a_1 + \dots + a_i \ge b_1 + \dots + b_i$ for all $i$
and $a_1 + \dots + a_n = b_1 + \dots + b_n$.)
In other words, $\#\DD_h$ is Schur-concave.

\medskip

Because
$\LL_{k+2}(a_1, \dots, a_n) = \bigcup_{h \le k+1} \DD_h(a_1, \dots, a_n)$
the Schur-concavity holds for $\#\LL_{k+2}$ as well:
\begin{corollary}
	\label{cor:Lswap_main}
	For each $k$, the cardinality of $\LL_{k+2}(a_1, \dots, a_n)$
	does not depend on the order of the $a_i$'s,
	and there is an explicit bijection between the sets.
\end{corollary}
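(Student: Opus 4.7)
The plan is to deduce Corollary \ref{cor:Lswap_main} directly from Theorem \ref{thm:Dswap_main} by exploiting the disjoint-union decomposition already highlighted in the excerpt. Specifically, since every $(a_1,\dots,a_n)$-ascending permutation avoiding $12\dots(k+2)$ has a well-defined longest increasing subsequence of some length $h \le k+1$, we have the disjoint union
\[ \LL_{k+2}(a_1, \dots, a_n) = \bigsqcup_{h=1}^{k+1} \DD_h(a_1, \dots, a_n), \]
and this decomposition is preserved by any map that sends longest-increasing-subsequence length to itself.

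Given any permutation $\sigma \in S_n$, I would define the bijection
\[ \Phi_\sigma \colon \LL_{k+2}(a_1, \dots, a_n) \to \LL_{k+2}(a_{\sigma(1)}, \dots, a_{\sigma(n)}) \]
piecewise: on each summand $\DD_h(a_1, \dots, a_n)$, declare $\Phi_\sigma$ to be the explicit bijection to $\DD_h(a_{\sigma(1)}, \dots, a_{\sigma(n)})$ supplied by Theorem \ref{thm:Dswap_main}. Since the target sets $\DD_h(a_{\sigma(1)}, \dots, a_{\sigma(n)})$ for $h = 1, \dots, k+1$ are themselves disjoint and their union is $\LL_{k+2}(a_{\sigma(1)}, \dots, a_{\sigma(n)})$, the assembled map $\Phi_\sigma$ is a well-defined bijection between the two $\LL_{k+2}$ sets. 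Taking cardinalities gives the symmetry in the $a_i$.

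There is essentially no obstacle here, since all the real work is hidden inside Theorem \ref{thm:Dswap_main}; the only thing one must verify is that the decomposition $\LL_{k+2} = \bigsqcup_{h} \DD_h$ is genuinely disjoint, which is immediate from the definition $\DD_h = \LL_{h+1} \setminus \LL_h$. It is worth remarking that because Theorem \ref{thm:Dswap_main} is stated uniformly in $h$, the resulting $\Phi_\sigma$ is a single explicit bijection (not depending on $k$) that simultaneously refines the statement: it sends permutations with a given longest increasing subsequence length to permutations with the same length. In particular, the same construction yields a Schur-concavity statement for $\#\LL_{k+2}$ as well, by summing the injections from Theorem \ref{thm:karamata} over $h \le k+1$.
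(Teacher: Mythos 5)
Your proposal is correct and matches the paper's own argument: the corollary is deduced from Theorem~\ref{thm:Dswap_main} via the disjoint decomposition $\LL_{k+2}(a_1,\dots,a_n) = \bigcup_{h \le k+1} \DD_h(a_1,\dots,a_n)$, assembling the length-preserving bijections piecewise. No further comment is needed.
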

\begin{corollary}
	Fix $k$, and suppose the sequence $a_1 \ge \dots \ge a_n$
	majorizes the sequence $b_1 \ge \dots \ge b_n$.
	Then there is an explicit injection
	\[ \#\LL_{k+2}\left( a_1, a_2, \dots, a_n \right)
		\hookrightarrow \#\LL_{k+2}\left( b_1, b_2, \dots, b_n \right). \]
\end{corollary}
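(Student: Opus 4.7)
The plan is to deduce the corollary directly from Theorem~\ref{thm:karamata} via the disjoint decomposition
\[
  \LL_{k+2}(a_1, \dots, a_n)
  = \bigsqcup_{h \le k+1} \DD_h(a_1, \dots, a_n),
\]
which is recorded in the line immediately preceding Corollary~\ref{cor:Lswap_main}. The union is genuinely disjoint because, by definition, $\DD_h$ collects precisely those $(a_1, \dots, a_n)$-ascending permutations whose longest increasing subsequence has length exactly $h$. Applying the same decomposition to $(b_1, \dots, b_n)$ then reduces the task to producing, for each $h \le k+1$, an injection $\DD_h(a_1, \dots, a_n) \hookrightarrow \DD_h(b_1, \dots, b_n)$.

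Such injections are supplied verbatim by Theorem~\ref{thm:karamata}, since the majorization hypothesis on $(a_i)$ and $(b_i)$ carries over unchanged. Taking the disjoint union of these explicit injections over $h \le k+1$ yields an explicit injection
\[
  \LL_{k+2}(a_1, \dots, a_n) \hookrightarrow \LL_{k+2}(b_1, \dots, b_n),
\]
as required. I do not anticipate any real obstacle here; the one feature worth flagging is that the injections of Theorem~\ref{thm:karamata} are stratified by $h$ (they send $\DD_h$ into $\DD_h$ for the \emph{same} $h$), so assembling them across strata automatically respects the pattern-avoidance bound $k+2$ defining $\LL_{k+2}$. Once Theorem~\ref{thm:karamata} is in hand, the corollary is a purely formal bookkeeping consequence.
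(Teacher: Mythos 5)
Your proposal is correct and is precisely the argument the paper intends: it deduces the corollary from Theorem~\ref{thm:karamata} by partitioning $\LL_{k+2}$ into the sets $\DD_h$ for $h \le k+1$ and assembling the stratum-wise injections. No further comment is needed.
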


We will also make the following simple observation:
\begin{lemma}
	For all $k$, $a_2$, \dots, $a_n$,
	\[ \#\LL_{k+2}(k+1, a_2, \dots, a_n) = 
		\#\LL_{k+2}(k, a_2, \dots, a_n) \]
	and there is an explicit bijection between these sets.
	\label{lem:destriv}
\end{lemma}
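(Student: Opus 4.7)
The plan is to identify exactly what the combination of ``first block has length $k+1$'' together with $12\dots(k+2)$-avoidance forces. First I would reduce to the case $\max_i a_i \le k+1$, since otherwise some block of length $\ge k+2$ is itself a forbidden pattern and both sides are empty. Then for any $\pi \in \LL_{k+2}(k+1, a_2, \dots, a_n)$, the first block is already an increasing sequence $\pi_{1,1} < \dots < \pi_{1,k+1}$ of length $k+1$, so if any entry appearing in blocks $2, \dots, n$ were larger than $\pi_{1,k+1}$, appending it would produce a forbidden $12\dots(k+2)$ pattern. Hence $\pi_{1,k+1}$ must equal the global maximum $N \defeq (k+1) + a_2 + \dots + a_n$.

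From here the bijection essentially writes itself: define
\[ \Phi \colon \LL_{k+2}(k+1, a_2, \dots, a_n) \to \LL_{k+2}(k, a_2, \dots, a_n) \]
by deleting the last entry of the first block (which is necessarily $N$), and define its candidate inverse $\Psi$ by appending the value $N$ at the end of the first block of $\sigma$. The verifications are routine: for $\Phi$, deleting an entry cannot create any new increasing subsequence, so the image avoids $12\dots(k+2)$ and has block pattern $(k, a_2, \dots, a_n)$; for $\Psi$, any increasing subsequence containing the newly-inserted entry $N$ lies entirely within the new first block of length $k+1$ (since $N$ is maximal and sits at position $k+1$) and so has length at most $k+1$, while any increasing subsequence avoiding that entry is inherited unchanged from $\sigma$.

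I do not expect any genuine obstacle here---the whole argument rests on the single observation that the last entry of the first block is forced to be the maximum, after which $\Phi$ and $\Psi$ are visibly mutually inverse. The edge case $n=1$ gives $\#\LL_{k+2}(k+1) = \#\LL_{k+2}(k) = 1$, consistent with the bijection sending the unique identity permutation to the unique identity permutation.
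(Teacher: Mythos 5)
Your proposal is correct and follows essentially the same route as the paper: observe that $\pi_{1,k+1}$ is forced to be the global maximum (else appending it to the increasing first block yields a $12\dots(k+2)$ pattern), then delete/insert that maximal entry in position $k+1$ of the first block. The paper states the insertion direction more tersely, but your verification that any increasing subsequence through the inserted maximum stays inside the first block is exactly the justification it implicitly relies on.
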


The proofs of Theorem~\ref{thm:Dswap_main} (hence Corollary~\ref{cor:Lswap_main})
and Lemma~\ref{lem:destriv} are explicit bijections,
not relying on the RSK correspondence.
Hence these two results resolve Mei and Wang's problem
\cite[Problem 4.2]{meiwang},
because by composing them appropriately we may obtain a direct bijection
between any two sets of the form described in \eqref{eq:yuanti}.

\subsection{Outline}
The rest of the paper is structured as follows.
First in Section~\ref{sec:destriv} we quickly prove Lemma~\ref{lem:destriv}.
Then, in Section~\ref{sec:WV} we describe two maps $\MW$ and $\MV$
in the special situation $n = 2$, which will form the core of the proof.
In Section~\ref{sec:mainpf} we show how to extend the maps $\MW$ and $\MV$
in order to obtain the desired bijection.
Finally in Section~\ref{sec:notmuch} we
compute some specific values of $\#\LL_{k+2}(a_1, \dots, a_n)$.

\section{Proof of Lemma~\ref{lem:destriv}}
\label{sec:destriv}

First, we make the following observation.

\begin{lemma}
	If $\pi \in \LL_{k+2}(k+1, a_2, \dots, a_n)$ then
	$\pi_{1, k+1}$ is the largest element of $\pi$,
	that is, $\pi_{1,k+1} = (k+1) + a_2 + \dots + a_n$.
\end{lemma}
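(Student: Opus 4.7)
The plan is a short proof by contradiction, exploiting the fact that the first block already contributes an increasing subsequence of length exactly $k+1$. Write $N \defeq (k+1) + a_2 + \dots + a_n$ for the largest element of the ground set on which $\pi$ is defined.

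First I would suppose for contradiction that $\pi_{1,k+1} \ne N$. Since the first block is ascending, $\pi_{1,k+1} = \max\{\pi_{11}, \dots, \pi_{1,k+1}\}$, so the value $N$ must appear in some later block, say $N = \pi_{jm}$ with $j \ge 2$. Then
\[ \pi_{11} < \pi_{12} < \dots < \pi_{1,k+1} < \pi_{jm} = N \]
is an increasing subsequence of length $k+2$ in $\pi$, contradicting the assumption that $\pi \in \LL_{k+2}(k+1, a_2, \dots, a_n)$. This forces $\pi_{1,k+1} = N$, as claimed.

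There is essentially no obstacle; the only subtlety worth flagging is that the argument uses the precise length $k+1$ of the first block in a critical way. Because the first block already supplies $k+1$ strictly increasing entries, only one further entry lying to its right is needed to complete a forbidden $12\dots(k+2)$ pattern, so the global maximum $N$ is pinned to the end of the first block. The same reasoning breaks down as soon as $a_1 \le k$, which is precisely why Lemma~\ref{lem:destriv} singles out the case $a_1 = k+1$ as the one that can be reduced.
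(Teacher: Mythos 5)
Your proof is correct and is essentially the paper's own argument: both observe that the first block supplies an increasing run of length $k+1$, so any larger element appearing in a later block would complete a forbidden $12\dots(k+2)$ pattern. The only cosmetic difference is that you phrase it as a contradiction while the paper states it directly.
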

\begin{proof}
	By definition $\pi_{1,1} < \dots < \pi_{1,k+1}$.
	Moreover if $i \ge 2$ and $\pi_{i,j} > \pi_{1,k+1}$
	then $\pi_{1,1} < \dots < \pi_{1,k+1} < \pi_{i,j}$
	would be a $12\dots(k+2)$ pattern.
\end{proof}

This gives us the map
\begin{align*}
	\LL_{k+2}(k+1, a_2, \dots, a_n) &\to \LL_{k+2}(k, a_2, \dots, a_n) \\
	\intertext{defined by}
	\pi_{1,1} \dots \pi_{1,k} \pi_{1,k+1} \mid \pi_{2,1} \dots
	&\mapsto \pi_{1,1} \dots \pi_{1,k} \mid \pi_{2,1} \dots
\end{align*}
where we simply delete the maximal element from the $(k+1)$st position.
This map obviously admits an inverse,
since inserting a maximal element in the $(k+1)$st position
cannot introduce a $1\dots(k+2)$ pattern.
This produces the claimed bijection.

\section{The Bijections $\MW$ and $\MV$}
\label{sec:WV}

In this section we define two maps $\MW$ and $\MV$ between
sets of the form $\DD_h(p,q)$ for a fixed $h$.
These maps form the heart of the proof of Theorem~\ref{thm:Dswap_main}.

First, we introduce some notation for permutations of $\DD_h(p,q)$,
where $0 \le p,q \le h$. Consider a permutation
\[ \pi = x_1 x_2 \dots x_{p} \mid y_q y_{q-1} \dots y_1 \in \DD_h(p, q). \]
As the maximal increasing subsequence of $\pi$ has length $h$,
there should be an index $j$ such that
\begin{equation}
	x_1 < \dots < x_j < y_{h-j} < \dots < y_1.
	\label{eq:ridge}
\end{equation}
However, this $j$ may not be unique;
for example, $1368\mid2457$ has two maximal increasing subsequences,
namely $12457$ and $13457$.
Nonetheless, we are interested in the \emph{largest}
and \emph{smallest} indices with this property.
\begin{definition}
	For $\pi \in \DD_h(p,q)$, we denote by $\nu_h(\pi)$ and $\omega_h(\pi)$
	the smallest and largest index $j$, respectively,
	which satisfies \eqref{eq:ridge}.
\end{definition}
%Note that when $\nu_h(\pi) < p$
%we have the relations $x_{\nu_p(h)} < y_{h-\nu_p(h)-1}$

With this definition we may define the map $\MW$.
\begin{definition}
	Suppose $p \in \{0, \dots, h-1\}$ and $q \in \{1, \dots, h\}$.
	We define the map
	\[ \DD_h(p, q) \MWto \DD_h(p+1, q-1) \]
	by
	\begin{align*}
	\pi &= x_1 \dots x_p \mid y_q \dots y_1 \in \DD_h(p, q) \\
	\mapsto \MW(\pi) &= x_1 \dots x_j y_{h-j} x_{j+1} \dots x_p \mid
		y_q \dots y_{h-j+1} y_{h-j-1} \dots y_1
	\end{align*}
	where $j  = \omega_h(\pi)$.
	\label{def:W}
\end{definition}
In other words (in the notation of Definition~\ref{def:W}),
\begin{equation}
	x_1 < \dots < x_j < y_{h-j} < \dots < y_1
	\label{eq:maxsubseq}
\end{equation}
is an increasing subsequence of maximal length.
Observe that this requires $y_{h-j} = x_j + 1$
(or $y_{h-j} = 1$ if $j=0$),
since otherwise $y_{h-j}-1$ could be inserted into \eqref{eq:maxsubseq}.

\begin{example}
	For $(p,q)=(3,5)$, $h=6$, we have an example
	\begin{align*}
		\DD_6(3,5) &\MWto \DD_6(4,4) \\
		236 \mid 14578 &\mapsto 2346 \mid 1578.
	\end{align*}
\end{example}

\begin{proposition}
	This map is well-defined; that is, 
	the longest increasing subsequence of $\MW(\pi)$
	has length $h$.
\end{proposition}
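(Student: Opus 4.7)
The plan is to verify two things: that $\MW(\pi)$ has the requisite $(p+1, q-1)$-ascending block structure, and that its longest increasing subsequence has length exactly $h$. The block structure is short: the observation $y_{h-j} = x_j + 1$ (or $y_{h-j} = 1$ when $j = 0$) recorded immediately after Definition~\ref{def:W}, combined with $x_j < x_{j+1}$ and integrality, forces $y_{h-j} < x_{j+1}$, so the new first block $x_1 < \dots < x_j < y_{h-j} < x_{j+1} < \dots < x_p$ ascends; the new second block ascends trivially as a deletion. The lower bound $\ge h$ on the longest increasing subsequence of $\MW(\pi)$ is also immediate, since the sequence in \eqref{eq:maxsubseq} appears in the same relative order in $\MW(\pi)$.

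The upper bound is the heart of the proof. I would first put any increasing subsequence into a canonical shape. Writing $\MW(\pi) = X_1 \dots X_{p+1} \mid Y_{q-1} \dots Y_1$ with $X_1 < \dots < X_{p+1}$ and $Y_{q-1} < \dots < Y_1$, any increasing subsequence can be enlarged to one of the form $X_1, \dots, X_a, Y_b, \dots, Y_1$ with $X_a < Y_b$ (using conventions $X_0 = -\infty$ and $Y_0 = +\infty$), and this canonical sequence has length $a + b$. Hence it suffices to rule out the existence of $(a, b)$ with $a + b = h + 1$ and $X_a < Y_b$.

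Given such a pair, I would split into three cases via the explicit dictionary $X_i = x_i$ for $i \le j$, $X_{j+1} = y_{h-j}$, $X_i = x_{i-1}$ for $i \ge j+2$, and $Y_i = y_i$ for $i \le h-j-1$, $Y_i = y_{i+1}$ for $i \ge h-j$. If $a \le j$, then the inequality becomes $x_a < y_{b+1}$, which produces a length-$(h+2)$ increasing subsequence $x_1 < \dots < x_a < y_{b+1} < y_b < \dots < y_1$ of $\pi$, contradicting $\pi \in \DD_h$. If $a = j+1$, then the inequality becomes $y_{h-j} < y_{h-j+1}$, which is false since a larger $y$-index denotes a smaller value. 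If $a \ge j+2$, then the inequality becomes $x_{a-1} < y_b$ with $(a-1) + b = h$, which exhibits an occurrence of \eqref{eq:ridge} for the strictly larger index $j' = a - 1 > j$, contradicting the maximality of $j = \omega_h(\pi)$.

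The principal obstacle is this last case: it is the sole step in which the specific choice $j = \omega_h(\pi)$, rather than merely some valid index, is used. This also foreshadows why the companion map $\MV$ must be built on $\nu_h$ (the \emph{smallest} valid index) instead of $\omega_h$, so as to make the analogous third-case argument go through with the roles of the two blocks reversed.
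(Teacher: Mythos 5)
Your proof is correct and follows essentially the same route as the paper's: the decisive step in both is that a length-$(h+1)$ increasing subsequence of $\MW(\pi)$ reaching past $y_{h-j}$ into the later $x$'s would pull back to an occurrence of \eqref{eq:ridge} in $\pi$ at an index larger than $j$, contradicting the maximality of $j = \omega_h(\pi)$. Your write-up is in fact somewhat more complete, since the paper silently reduces to the canonical form $X_1,\dots,X_a,Y_b,\dots,Y_1$ and treats only your third case, whereas you also dispatch the cases $a\le j$ and $a=j+1$ and check that the blocks of $\MW(\pi)$ remain ascending.
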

\begin{proof}
	Assume not, and that moving $y_{h-j}$ introduces
	some increasing subsequence with length $h+1$.
	Then there must be some index $k$ such that
	\begin{align*}
		x_1 &< \dots < x_j < y_{h-j} < x_{j+1} < \dots < x_k \\
		&< y_{h-k} < y_{h-k-1} < \dots < y_1.
	\end{align*}
	But then $x_1 < \dots < x_k < y_{h-k} < \dots < y_1$
	is an increasing subsequence of length $h$ in $\pi$,
	contradicting the choice of $j = \omega_h(\pi)$ being maximal.
\end{proof}

The map $\MV$ is defined in an analogous way, in the reverse direction.
\begin{definition}
	Suppose $p \in \{1, \dots, h \}$ and $q \in \{0, \dots, h-1\}$.
	We define the map
	\[ \DD_h(p-1, q+1) \MVfrom \DD_h(p,q) \]
	by
	\begin{align*}
	\pi &= x_1 \dots x_p \mid y_q \dots y_1 \in \DD_h(p, q) \\
	\mapsto \MV(\pi) &= x_1 \dots x_{j-1} x_{j+1} \dots x_p \mid
		y_q \dots y_{h-j+1} x_j y_{h-j} y_{h-j-1} \dots y_1
	\end{align*}
	where $j  = \nu_h(\pi)$.
\end{definition}

In exactly the same way as before we have the following.
\begin{proposition}
	This map is well-defined; that is,
	the longest increasing subsequence of $\MV(\pi)$
	has length $h$.
\end{proposition}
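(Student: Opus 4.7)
The plan is to mirror the preceding proposition for $\MW$, adapting the argument to the fact that the moved element $x_j$ now lands inside the second block of $\MV(\pi)$, between $y_{h-j+1}$ and $y_{h-j}$, rather than inside a homogeneous $x$-block. I would argue by contradiction: suppose $\MV(\pi)$ contains an increasing subsequence of length $h+1$. Since applying $\MV$ does not alter the relative position-order of any element other than $x_j$, any increasing subsequence of $\MV(\pi)$ that avoids $x_j$ is also one of $\pi$, hence has length at most $h$. So the hypothetical subsequence must use $x_j$; write it as
\[ a_1 < a_2 < \dots < a_s < x_j < b_1 < \dots < b_{h-s}. \]

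Next I would identify the pools from which the $a_i$ and $b_i$ are drawn. The elements smaller than $x_j$ appearing before $x_j$ in $\MV(\pi)$ are $\{x_1, \dots, x_{j-1}\}$ together with $\{y_q, \dots, y_{h-j+1}\}$, while the elements appearing after $x_j$ are $\{y_{h-j}, \dots, y_1\}$. Let $s_1 \le j-1$ denote the number of $x$'s used among the $a_i$. Deleting $x_j$ yields
\[ a_1 < \dots < a_s < b_1 < \dots < b_{h-s}, \]
which I would check is an increasing subsequence of $\pi$ of length $h$: the $y$-indices used among the $a_i$ are all $\ge h-j+1$, strictly larger than the $y$-indices used among the $b_i$ (all $\le h-j$), so the two groups of $y$'s sit in the correct position-order inside $\pi$'s second block, and all of the $a_i$ precede all of the $b_i$ in $\pi$.

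Finally, I would canonicalize this length-$h$ subsequence of $\pi$ into the form $x_1 < \dots < x_{s_1} < y_{h-s_1} < \dots < y_1$ of \eqref{eq:ridge}, contradicting the minimality of $j = \nu_h(\pi)$ since $s_1 \le j-1 < j$. The main obstacle is verifying $x_{s_1} < y_{h-s_1}$ so that this canonical form is actually an increasing subsequence: the $s_1$ $x$-indices used are at least $s_1$, and the $h - s_1$ distinct $y$-indices used have maximum at least $h - s_1$, so writing $x_{i'_{s_1}}$ for the largest $x$ used and $y_{J_1}$ for the $y$ immediately following it, the chain
\[ x_{s_1} \le x_{i'_{s_1}} < y_{J_1} \le y_{h-s_1} \]
delivers the required inequality. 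This canonicalization is the only genuinely new ingredient beyond the $\MW$ proof; everything else is a faithful adaptation.
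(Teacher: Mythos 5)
Your proof is correct and follows essentially the same strategy as the paper, which simply asserts the $\MV$ case is ``exactly the same'' as the $\MW$ case: assume a length-$(h+1)$ increasing subsequence exists, observe it must use the relocated element, and reduce to a length-$h$ subsequence of $\pi$ satisfying \eqref{eq:ridge} with an index smaller than $\nu_h(\pi)$, contradicting minimality. Your canonicalization step is a more careful spelling-out of what the paper leaves implicit, and it checks out.
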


\begin{proposition}
	The maps $\MW$ and $\MV$ are inverses, and hence bijections.
\end{proposition}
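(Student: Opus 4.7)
The plan is to establish an \emph{index-shift lemma}: for $\pi \in \DD_h(p,q)$ with $j = \omega_h(\pi)$, the image $\MW(\pi) \in \DD_h(p+1, q-1)$ satisfies $\nu_h(\MW(\pi)) = j+1$, and dually, for $\pi' \in \DD_h(p+1, q-1)$ with $j+1 = \nu_h(\pi')$, we have $\omega_h(\MV(\pi')) = j$. Granted this, the identity $\MV(\MW(\pi)) = \pi$ is a routine unraveling: invoking $\MV$ with split index $j+1$ on $\MW(\pi)$ excises the entry $y_{h-j}$ that $\MW$ had inserted at position $j+1$ of the left block, and places it back into the right block at the exact slot it came from. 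The identity $\MW(\MV(\pi')) = \pi'$ is verified analogously by symmetry.

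The heart of the argument is the index-shift claim for $\MW$. One inclusion is easy: the maximal-length chain $x_1 < \dots < x_j < y_{h-j} < \dots < y_1$ witnessing $\omega_h(\pi) = j$ is preserved by $\MW$ (the entries appear in the same left-to-right order in $\MW(\pi)$), so $j+1$ is a valid split index for $\MW(\pi)$, giving $\nu_h(\MW(\pi)) \le j+1$. For the reverse direction, I would assume toward contradiction that some $j' \le j$ also satisfies \eqref{eq:ridge} for $\MW(\pi)$. Since the right block of $\MW(\pi)$ is that of $\pi$ with $y_{h-j}$ deleted, and for every $j' \le j$ the index translation yields $y'_{h-j'} = y_{h-j'+1}$, the assumed chain translates into an increasing sequence
\[ x_1 < \dots < x_{j'} < y_{h-j'+1} < y_{h-j'} < \dots < y_{h-j+1} < y_{h-j-1} < \dots < y_1 \]
of length $h$ within $\pi$, skipping $y_{h-j}$. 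Reinserting $y_{h-j}$ between $y_{h-j+1}$ and $y_{h-j-1}$ — legal since $y_{h-j+1} < y_{h-j} < y_{h-j-1}$ — yields a strictly increasing subsequence of length $h+1$ in $\pi$, contradicting $\pi \in \DD_h(p,q)$.

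The anticipated obstacle is the careful bookkeeping of indices in the translation between entries of $\pi$ and $\MW(\pi)$, particularly the boundary case $j' = j$ where the $y$-portion of the reconstructed chain begins at $y_{h-j+1}$ and must jump over the deleted $y_{h-j}$; one must verify that the reinsertion remains a valid strict increase. Once the index-shift lemma is in hand, the proposition that $\MW$ and $\MV$ are mutually inverse bijections follows immediately by direct substitution.
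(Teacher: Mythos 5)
Your proposal is correct and follows essentially the same route as the paper: both reduce the identity $\MV(\MW(\pi))=\pi$ to the index-shift claim $\nu_h(\MW(\pi)) = \omega_h(\pi)+1$, proving ``$\le$'' via the surviving maximal chain and ``$\ge$'' by reinserting $y_{h-j}$ into a hypothetical shorter-index chain to manufacture a forbidden length-$(h+1)$ increasing subsequence in $\pi$. The index bookkeeping in your displayed chain checks out, so no further changes are needed.
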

\begin{proof}
	We will check that $\MV(\MW(\pi)) = \pi$,
	with the other direction being analogous.
	Write 
	\begin{align*}
	\pi &= x_1 \dots x_p \mid y_q \dots y_1 \in \DD_h(p, q) \\
	\mapsto \MW(\pi) &= x_1 \dots x_j y_{h-j} x_{j+1} \dots x_p \mid
		y_q \dots y_{h-j+1} y_{h-j-1} \dots y_1
	\end{align*}
	where $j = \omega(\pi)$.
	Now, observe that $\MW(\pi)$ still has a subsequence
	\[ x_1 < \dots < x_j < y_{h-j} < y_{h-j+1} < \dots < y_1 \]
	and consequently, we have $\nu(\MW(\pi)) \le j + 1$.

	We now contend that $\nu(\MW(\pi)) = j+1$.
	(Informally, this is because all length $h$ subsequences of
	smaller index in the original sequence relied on $y_{h-j}$,
	and hence are killed by the application of $\MW$.)
	Assume for contradiction that $\nu(\MW(\pi)) < j+1$,
	so there is a $\ell \le j$ such that 
	\begin{align*}
		x_1 &< \dots < x_\ell \\
		&< y_{h-\ell+1} < \dots < y_{h-j+1} < y_{h-j-1} < \dots < y_1
	\end{align*}
	is an increasing subsequence in $\MW(\pi)$.
	But this would imply that
	\begin{align*}
		x_1 &< \dots < x_\ell \\
		&< y_{h-\ell+1} < \dots < y_{h-j+1} < y_{h-j} < y_{h-j-1} < \dots < y_1
	\end{align*}
	is an increasing subsequence of length $h+1$ in $\pi$,
	which is a contradiction.
\end{proof}

By composing the bijection $\MV$, we deduce the following corollaries.
\begin{corollary}
	\label{cor:Dsumtwo}
	Let $h \ge 1$ and $p,q,p',q' \in \{1, \dots, h\}$
	such that $p+q=p'+q'$.
	Then \[ \# \DD_h(p,q) = \# \DD_h(p', q'). \]
\end{corollary}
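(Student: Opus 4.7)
The plan is to iterate the bijections $\MW$ and $\MV$ from the previous proposition. Since $p+q = p'+q'$, we may assume without loss of generality that $p \le p'$, in which case $q \ge q'$. Then I would compose $p'-p$ copies of $\MW$, chaining through the intermediate sets
\[ \DD_h(p,q) \MWto \DD_h(p+1,q-1) \MWto \cdots \MWto \DD_h(p',q'). \]
At each step, since the indices stay within $\{0,1,\dots,h\}$ and neither the left block reaches size $h$ (because the target $p' \le h$) nor the right block drops below size $0$ (because $q' \ge 0$), the hypotheses $p \in \{0,\dots,h-1\}$ and $q \in \{1,\dots,h\}$ needed to apply $\MW$ are satisfied. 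The composite is a bijection as a composition of bijections, proving equality of cardinalities.

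The only point to verify is the range check in the previous paragraph. Concretely, at the $i$th intermediate stage we have block sizes $(p+i, q-i)$ with $0 \le i \le p'-p$. Since $p \ge 1$ and $p+i \le p' \le h$, we get $p+i \in \{1,\dots,h\} \subseteq \{0,\dots,h-1\}$ whenever $i < p'-p$; and since $q-i \ge q' \ge 1$, we have $q-i \in \{1,\dots,h\}$. So $\MW$ is applicable at every step, which is all we need. The case $p \ge p'$ is symmetric, using $\MV$ instead. Thus the corollary follows with an explicit bijection given by composition.
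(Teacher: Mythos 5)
Your proposal is correct and matches the paper's approach exactly: the paper likewise obtains the corollary by composing the one-step bijections $\MW$ (equivalently $\MV$), and your range check confirming that the intermediate pairs stay in the domain of $\MW$ is the only detail the paper leaves implicit. (One cosmetic slip: the inclusion $\{1,\dots,h\}\subseteq\{0,\dots,h-1\}$ as written is false; what you actually need and have is $p+i\le p'-1\le h-1$ for $i<p'-p$.)
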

Observe that this already implies Theorem~\ref{thm:Dswap_main}
(and hence Corollary~\ref{cor:Lswap_main}) in the case $n = 2$;
that is, composition of $\MW$ induces a map
\begin{equation}
	\LL_{k+2}(p,q) \MWto \LL_{k+2}(q,p)
	\label{eq:Lswaptwo}
\end{equation}
whenever $p < q$.

\section{Proofs of Theorem~\ref{thm:Dswap_main} and Theorem~\ref{thm:karamata}}
\label{sec:mainpf}

\subsection{Structure Preservation Lemma}
First, we will make the following useful observation about the map $\MW$.
\begin{lemma}
	Let $\DD_h(p,q) \MWto \DD_h(p+1,q-1)$,
	and $\pi \in \DD_h(p,q)$.
	For $1 \le a < b \le p+q$, the following are equivalent:
	\begin{itemize}
		\ii There is an increasing subsequence of length $r$ in $\pi$
		consisting of only elements in the interval $[a,b]$.
		\ii There is an increasing subsequence of length $r$ in $\MW(\pi)$
		consisting of only elements in the interval $[a,b]$.
	\end{itemize}
	\label{lem:h_preserve}
\end{lemma}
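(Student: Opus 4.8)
The plan is to analyze precisely what the map $\MW$ does to the sequence and observe that it only relocates a single entry $y_{h-j}$, moving it from its position in the second block to a position in the first block. Write $m = y_{h-j}$ for this relocated value; recall from the remark after Definition~\ref{def:W} that $m = x_j + 1$ (or $m = 1$ when $j = 0$). The key structural observation is that passing from $\pi$ to $\MW(\pi)$ changes neither the multiset of values present nor their relative order \emph{except} for the single value $m$, which is moved leftward past the entries $x_{j+1}, \dots$ up to the block divider. So the two permutations agree as sequences away from $m$, and to compare increasing subsequences restricted to a value-interval $[a,b]$ I only need to understand how the repositioning of $m$ affects such subsequences, and only in the case $a \le m \le b$ (if $m \notin [a,b]$ the relevant subsequences in $\pi$ and $\MW(\pi)$ literally coincide, since $m$ cannot participate).

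First I would dispose of the case $m \notin [a,b]$ as above. For the main case $a \le m \le b$, I would argue both directions by taking an increasing subsequence of length $r$ using only values in $[a,b]$ and producing one of the same length in the other permutation. The symmetric and cleaner formulation is: $\MV$ undoes $\MW$, so it suffices to show one implication, say that an increasing $[a,b]$-subsequence of length $r$ in $\pi$ yields one in $\MW(\pi)$, and then invoke the fact (already proved) that $\MW$ and $\MV$ are inverse to run the same argument for $\MV$. In that implication, if the chosen subsequence of $\pi$ does not use the value $m$, it survives verbatim in $\MW(\pi)$ since every other entry keeps its relative order. If it does use $m = x_j+1$, the point is that $m$'s new position in $\MW(\pi)$ lies immediately after $x_j$ in the first block; because $m = x_j + 1$, the entry $x_j$ can serve as a predecessor, and more generally the entries $x_1 < \dots < x_j$ sit to the left of $m$'s new location while the entries $y_{h-j-1}, \dots, y_1$ (all exceeding $m$) remain to its right. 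So one can reroute any increasing subsequence through the new position of $m$ without changing its length or leaving the interval $[a,b]$.

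The step I expect to be the main obstacle is verifying that rerouting cleanly, i.e.\ checking that when $m$ is used in a $[a,b]$-increasing subsequence of $\pi$, the elements chosen before $m$ are all among $x_1, \dots, x_j$ and those after $m$ are all among the $y$'s with index $< h-j$, so that the subsequence remains increasing after $m$ is transplanted into the first block. The crux is that any value chosen immediately before $m$ in $\pi$ must be $\le m - 1 = x_j$, and since the first block is increasing and $x_j$ is the largest first-block entry below $m$, such a predecessor lies among $x_1, \dots, x_j$; symmetrically any value chosen after $m$ exceeds $m$ and hence cannot be one of $x_{j+1}, \dots, x_p$ that $m$ skips over (one must confirm this using $m = x_j+1$ together with the definition of $\omega_h(\pi)$, which guarantees $x_{j+1} > y_{h-j} = m$ is impossible in a way that would create a longer pattern). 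Once this positional bookkeeping is pinned down, both the length and the membership-in-$[a,b]$ of the rerouted subsequence are immediate, and the equivalence follows; combining with the $\MW/\MV$ inverse relation gives the reverse implication for free.
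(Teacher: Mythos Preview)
Your overall strategy is sound, but there is a genuine gap in the ``rerouting'' step. You claim that if the chosen subsequence of $\pi$ uses $m = y_{h-j}$, then ``the elements chosen before $m$ are all among $x_1, \dots, x_j$.'' This is false: a predecessor of $m$ can just as well be one of the second-block entries $y_q, \dots, y_{h-j+1}$, all of which have value below $m$ and sit positionally before $m$ in $\pi$. In the paper's own example $\pi = 236 \mid 14578 \in \DD_6(3,5)$ with $j=2$ and $m = y_4 = 4$, the increasing subsequence $1,4,5 = y_5, y_4, y_3$ (values in $[1,5]$) has $y_5 = 1$ as predecessor of $m$. After applying $\MW$ to get $2346 \mid 1578$, the value $1$ now sits to the \emph{right} of $4$, so the same triple is no longer an increasing subsequence; your transplant of $m$ alone does not carry this subsequence over. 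Your justification conflates the value constraint ``$< m$'' with the positional constraint ``in the first block.''

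What is missing is precisely the mechanism the paper supplies: the inequality $y_{h-j+\ell} < x_{j-\ell+1}$ for $1 \le \ell \le j$ (proved by noting its failure would give a length-$(h{+}1)$ pattern in $\pi$). This lets you \emph{replace} any block of $y$-predecessors $y_{h-j+\ell}, \dots, y_{h-j+1}$ of $m$ by the first-block entries $x_{j-\ell+1}, \dots, x_j$, which do lie to the left of $m$ in $\MW(\pi)$ and are sandwiched in value between $y_{h-j+\ell}$ and $m$, hence remain in $[a,b]$. Once you insert this substitution step, the rest of your outline (reduction to subsequences containing $m$, and appealing to $\MV = \MW^{-1}$ for the reverse implication) goes through.
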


\begin{proof}
	We will check only the forward direction,
	the reverse direction being analogous using $\MV$ in place of $\MW$.
	As always, let $j = \omega(\pi)$ and write
	\begin{align*}
	\pi &= x_1 \dots x_p \mid y_q \dots y_1 \in \DD_h(p, q) \\
	\mapsto \MW(\pi) &= x_1 \dots x_j y_{h-j} x_{j+1} \dots x_p \mid
		y_q \dots y_{h-j+1} y_{h-j-1} \dots y_1
	\end{align*}
	Clearly it suffices to consider subsequences which involve $y_{h-j}$,
	since any other subsequence remains intact under $\MW$.
	
%	Consider any subsequence of the form
%	\[ y_{h-j+\ell} < y_{h-j+(\ell-1)} < \dots < y_{h-j} \]
%	where $\ell \ge 1$.
%	We claim that
%	\[ y_{h-j+\ell} < x_{j-(\ell-1)} < \dots < x_j < y_{h-j} \]
	We claim that $y_{h-j+\ell} < x_{j-\ell+1}$ for $1 \le \ell \le j$.
	Indeed, if this was not the case, then we could construct a sequence
	of length greater than $h$ in $\pi$ by taking
	\[ x_1 < \dots < x_{j-\ell+1} < y_{h-j+\ell} < \dots < y_{h-j} < \dots < y_1. \]
	Thus, given any subsequence, if there are any $y$ terms less than $y_{h-j}$
	then we may replace them with corresponding $x$ terms instead.
	Explicitly, if our subsequence of length $r$ in $\pi$ is
	\[ a \le x_{i_1} < \dots < x_{i_2} < y_{h-j+\ell}
		< \dots < y_{h-j} < \dots < y_{i_3} \le b \]
	then in $\MW(\pi)$ we have
	\[ a \le x_{i_1} < \dots < x_{i_2} < x_{j-(\ell-1)} < \dots
		< x_j < y_{h-j} < \dots < y_{i_3} \le b. \]
	This proves the lemma.
\end{proof}

\subsection{Proof of Theorem~\ref{thm:Dswap_main}}
We are now ready to prove the following result,
which implies Theorem~\ref{thm:Dswap_main} directly.
\begin{theorem}
	\label{thm:adjswap}
	For any index $\ell$, if $a_\ell \le a_{\ell+1}$
	then we have a bijection
	\begin{equation}
		\DD_h\left( a_1, \dots, a_\ell, a_{\ell+1}, \dots, a_n \right)
		\to \DD_h\left( a_1, \dots, a_{\ell+1}, a_\ell, \dots, a_n \right)
		\label{eq:Dtwobig}
	\end{equation}
	obtained by applying $\MW$ in \eqref{eq:Lswaptwo}
	on only the $\ell$th and $(\ell+1)$st blocks,
	viewed as a permutation on $\{1, \dots, a_\ell + a_{\ell+1}\}$.
	The inverse map is given by applying $\MV$ in the same way.

	In other words, we may swap two adjacent $a_i$'s.
\end{theorem}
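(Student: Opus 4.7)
The plan is to reduce the $n$-block problem to the two-block situation of Section~\ref{sec:WV}. Let $\pi_{\mathrm{mid}}$ denote the sub-word of $\pi$ consisting of only the $\ell$th and $(\ell+1)$st blocks; its values form some subset $S \subseteq \{1,\dots,a_1+\dots+a_n\}$ of size $a_\ell + a_{\ell+1}$. Standardizing via the unique order-preserving bijection $S \to \{1,\dots,|S|\}$ turns $\pi_{\mathrm{mid}}$ into an $(a_\ell, a_{\ell+1})$-ascending permutation of $\{1,\dots,a_\ell+a_{\ell+1}\}$ whose longest increasing subsequence has some length $h_0 \le h$. Applying the composite map \eqref{eq:Lswaptwo} there (if $a_\ell = a_{\ell+1}$ the map is trivially the identity), un-standardizing back to values in $S$, and splicing into $\pi$ produces a candidate element of $\DD_h(a_1,\dots,a_{\ell+1},a_\ell,\dots,a_n)$; by construction (Definition~\ref{def:W}) it has the correct block-ascending structure.

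The main obstacle is confirming that this candidate actually lies in $\DD_h$, i.e., that its longest increasing subsequence still has length exactly $h$. Here I would use the canonical decomposition: any increasing subsequence of a block-ascending permutation factors as a \emph{prefix} (drawn from blocks $1,\dots,\ell-1$), a \emph{middle} (from blocks $\ell$ and $\ell+1$), and a \emph{suffix} (from blocks $\ell+2,\dots,n$). If the middle has value range $[a,b]$, then automatically prefix values lie in $[1,a-1]$ and suffix values in $[b+1, a_1+\dots+a_n]$. Lemma~\ref{lem:h_preserve}, applied iteratively to each single-step $\MW$ on the standardized $\pi_{\mathrm{mid}}$, asserts that the existence of an increasing subsequence of length $r$ with values confined to $[a,b]$ is invariant under the transformation (the standardization is order-preserving and thus carries intervals to intervals). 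Splicing any such middle back together with the untouched prefix and suffix yields an increasing subsequence of the same total length in the new permutation; running the argument in both directions shows the longest-increasing-subsequence statistic is preserved at $h$.

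Finally, since $\MW$ and $\MV$ are mutual inverses on two-block ascending permutations (Section~\ref{sec:WV}) and the standardize/un-standardize step is reversible, the two extended maps obtained by restricting to blocks $\ell$ and $\ell+1$ are likewise mutual inverses, completing the bijection.
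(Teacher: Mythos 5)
Your proposal is correct and follows essentially the same route as the paper: restrict to the two blocks, apply the composite of $\MW$ from \eqref{eq:Lswaptwo} to the standardized sub-permutation, and invoke Lemma~\ref{lem:h_preserve} (via the prefix/middle/suffix decomposition of an increasing subsequence, with the middle confined to a value interval) to see that the global longest-increasing-subsequence length is unchanged, with invertibility coming from $\MV$. The paper organizes this by partitioning each side according to the length $r$ of the longest increasing subsequence within the two blocks and exhibiting bijections $X_r \to Y_r$, but the substance is identical; if anything, your explicit interval argument spells out a step the paper leaves terse.
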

\begin{example}
	For an example with $\DD_5(1,2,4,1) \to \DD_5(1,4,2,1)$
	we have
	\begin{align*}
		1\mid37\mid2458\mid6 &\mapsto 1\mid347\mid258\mid6 \\
		&\mapsto 1\mid3478\mid25\mid6.
	\end{align*}
\end{example}
\begin{proof}[Proof of Theorem~\ref{thm:adjswap}]
	Each permutation in
	$\DD_h(a_1, \dots, a_{\ell}, a_{\ell+1}, \dots, a_n)$
	naturally induces a permutation of $\DD_r(a_\ell, a_{\ell+1})$
	for some $r \ge a_{\ell+1}$,
	by looking at the relative ordering of the $a_\ell + a_{\ell+1}$
	elements in these two blocks.
	(To be exact, $r$ is the length of the longest increasing subsequence
	among $\pi_{\ell 1} \dots \pi_{\ell a_\ell} \mid
	\pi_{(\ell+1)1} \dots \pi_{(\ell+1) \pi_{\ell+1}}$.)
	In this way, we obtain a partition
	\[ \DD_h(a_1, \dots, a_{\ell}, a_{\ell+1}, \dots, a_n)
		= \bigcup_{r \ge a_{\ell+1}} X_r \]
	where $X_r$ is the set of permutations in $\DD_h(a_1, \dots, a_n)$
	whose longest increasing subsequence among the $\ell$th
	and $(\ell+1)$st block has length exactly $r$.

	Similarly, each permutation in
	$\DD_h(a_1, \dots, a_{\ell+1}, a_{\ell}, \dots, a_n)$
	naturally induces a permutation of $\DD_r(a_{\ell+1}, a_\ell)$
	for some $r \ge a_{\ell+1}$.
	So in exactly the same fashion we partition the left-hand side as
	\[ \DD_h(a_1, \dots, a_{\ell+1}, a_{\ell}, \dots, a_n)
		= \bigcup_{r \ge a_{\ell+1}} Y_r \]
	with $Y_r$ denoting those permutations in the right-hand side
	whose longest increasing subsequence among the $\ell$th
	and $(\ell+1)$st block has length exactly $r$.

	We claim that applying $\MW$ as 
	described in Theorem~\ref{thm:adjswap}
	yields a bijection $X_r \to Y_r$.
	This follows from Lemma~\ref{lem:h_preserve}:
	the lemma then ensures that at each application of $\MW$,
	no $1\dots(r+1)$ patterns are created,
	nor are any $1\dots r$ patterns destroyed.
	So the image of this map on $X_r$ really does lie in $Y_r$,
	as claimed.

	In the same way we may use $\MV$ to define a map in the reverse direction.
	Since $\MW$ and $\MV$ are inverses,
	we have produced a bijection $X_r \to Y_r$.
	Putting these together for all $r \ge a_{\ell+1}$ gives
	the desired result.
\end{proof}

%\begin{remark}
%	In the situation of \eqref{eq:Dtwobig},
%	it's worth noting that the bijection preserves
%	everything other than the swapped blocks.
%	For example, this implies that if $\ell \neq 1$,
%	then the number of bijections on each side that start
%	with $1$ (or any other number) are equal.
%\end{remark}

% [346] | 125[78]

\subsection{Proof of Theorem~\ref{thm:karamata}}
In analogy to before, we will prove the following result,
which implies Theorem~\ref{thm:karamata}.
\begin{theorem}
	\label{thm:adjconcave}
	For any index $\ell$, if $a_{\ell+1} \ge a_{\ell} + 2$
	then we have an injective map
	\begin{equation}
		\DD_h\left( a_1, \dots, a_\ell, a_{\ell+1}, \dots, a_n \right)
		\hookrightarrow
		\DD_h\left( a_1, \dots, a_\ell+1, a_{\ell+1}-1, \dots, a_n \right)
		\label{eq:concaveswap}
	\end{equation}
	obtained by applying $\MW$ in \eqref{eq:Lswaptwo}
	on only the $\ell$th and $(\ell+1)$st blocks,
	viewed as a permutation on $\{1, \dots, a_\ell + a_{\ell+1}\}$.
\end{theorem}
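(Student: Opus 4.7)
The plan is to mimic the proof of Theorem~\ref{thm:adjswap} almost verbatim; the one new ingredient is that on the target side the partition by restricted longest-increasing-subsequence (LIS) length has one extra bucket that is not hit, which is precisely what yields injectivity rather than bijectivity.

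First I would partition both sides by the LIS of the restriction to blocks $\ell, \ell+1$. On the domain, write
\[ \DD_h(a_1, \dots, a_\ell, a_{\ell+1}, \dots, a_n) = \bigsqcup_{r \ge a_{\ell+1}} X_r, \]
where $X_r$ consists of those $\pi$ whose restriction to blocks $\ell, \ell+1$ (relabeled as a permutation of $\{1, \dots, a_\ell + a_{\ell+1}\}$) lies in $\DD_r(a_\ell, a_{\ell+1})$; the lower bound $r \ge a_{\ell+1}$ holds because block $\ell+1$ alone yields an increasing run of that length, and $a_{\ell+1} > a_\ell$ under the hypothesis. Analogously, partition the codomain as $\bigsqcup_{r \ge a_{\ell+1}-1} Y_r$, where the lower bound $a_{\ell+1} - 1 = \max(a_\ell+1, a_{\ell+1}-1)$ uses $a_{\ell+1} \ge a_\ell + 2$.

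Next, for each $r \ge a_{\ell+1}$, I would apply the map $\MW\colon \DD_r(a_\ell, a_{\ell+1}) \to \DD_r(a_\ell+1, a_{\ell+1}-1)$ from Definition~\ref{def:W} to the restriction. The preconditions $a_\ell \le r-1$ and $1 \le a_{\ell+1} \le r$ follow at once from $a_\ell \le a_{\ell+1}-2 \le r-2$ and $2 \le a_{\ell+1} \le r$. Then, exactly as in the proof of Theorem~\ref{thm:adjswap}, Lemma~\ref{lem:h_preserve} ensures that modifying only blocks $\ell, \ell+1$ this way neither creates a $1\dots(h+1)$ pattern globally (since any increasing subsequence splits into value-interval portions on blocks before $\ell$, on blocks $\ell,\ell+1$, and on blocks after $\ell+1$, and the middle portion's length is preserved by Lemma~\ref{lem:h_preserve}) nor destroys the LIS-length-$h$ condition. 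Hence the image of $X_r$ lies in $Y_r$, and the inverse map $\MV$ (applied in the same blockwise fashion) provides a two-sided inverse, exhibiting $X_r \leftrightarrow Y_r$ as a bijection for every $r \ge a_{\ell+1}$.

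Taking the union over $r \ge a_{\ell+1}$ then yields the desired injection $\bigsqcup_r X_r \hookrightarrow \bigsqcup_{r \ge a_{\ell+1}-1} Y_r$, whose image is exactly the complement of the missed bucket $Y_{a_{\ell+1}-1}$. The main point---hardly an obstacle---is just this bookkeeping: the missed bucket corresponds to codomain permutations whose restricted LIS equals the strict minimum $a_{\ell+1}-1$, which cannot arise as an image because every element of the domain has restricted LIS at least $a_{\ell+1}$.
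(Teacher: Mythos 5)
Your proposal is correct and follows essentially the same route as the paper: partition both sides by the length $r$ of the longest increasing subsequence restricted to blocks $\ell$ and $\ell+1$, use Lemma~\ref{lem:h_preserve} to get bijections $X_r \to Y_r$ for $r \ge a_{\ell+1}$, and observe that the unmatched bucket $Y_{a_{\ell+1}-1}$ on the codomain side only costs surjectivity, not injectivity. Your explicit verification of the domain/codomain preconditions for $\MW$ is a small addition the paper leaves implicit, but the argument is the same.
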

\begin{proof}
	This is really an observation made within the proof of
	Theorem~\ref{thm:adjswap}.
	Retaining the notation in our earlier proof, we decompose
	\begin{align*}
		\DD_h(a_1, \dots, a_{\ell}, a_{\ell+1}, \dots, a_n)
		&= \bigcup_{r \ge a_{\ell+1}} X_r \\
		\DD_h(a_1, \dots, a_{\ell}+1, a_{\ell+1}-1, \dots, a_n)
		&= \bigcup_{r \ge a_{\ell+1}-1} Y_r.
	\end{align*}
	As in the proof of Theorem~\ref{thm:Dswap_main},
	we obtain bijections $X_r \to Y_r$ for $r \ge a_{\ell+0}$
	which collate to give a bijection
	\[ \bigcup_{r \ge a_{\ell+1}} X_r
		\to \bigcup_{r \ge a_{\ell+1}} Y_r. \]
	The change from the previous proof is that we now have a set
	$Y_{a_{\ell+1}-1}$ on the right-hand side
	which is not in the image of our map.
	Nonetheless we may still conclude our map is injective,
	which proves Theorem~\ref{thm:adjconcave}.
\end{proof}

\section{Enumeration}
\label{sec:notmuch}
Now that we have a symmetry result,
we turn our attention to actually computing
$\#\LL_{k+2}(a_1, \dots, a_n)$ in certain situations.
By the main result of this paper, it suffices to assume
\[ 1 \le a_1 \le \cdots \le a_n \le k. \]

The general problem of computing the value seems difficult,
since the special case $a_1 = \dots = a_n = 1$
is equivalent to computing the number of $12\dots(k+2)$
avoiding permutations; no closed formula is known for $k \ge 3$.
Nonetheless, even computing the cardinality for special cases
other than those for which $a_i \in \{k, k+1\}$ would be interesting.
We give some examples here.

\subsection{The $n=2$ Case}
We show that $\#\DD_h(p,q)$ is given by the entries of Catalan's triangle.
\begin{proposition}
	As usual, let
	\[ C(n,k) = \frac{(n+k)! (n-k+1)}{k!(n+1)!}
		= \binom{n+k}{k} - \binom{n+k}{k-1} \]
	denote the $(n,k)$th entry of Catalan's triangle.
	Then for any $1 \le p \le q \le h$, we have
	\[ \# \DD_h \left( p, q \right) 
		= \begin{cases}
			C(h, p+q-h) & p+q \ge h \\
			0 & p+q < h.
		\end{cases} \]
\end{proposition}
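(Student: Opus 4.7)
The plan is to combine Corollary~\ref{cor:Dsumtwo} with a lattice-path argument. The case $p + q < h$ is immediate, since a permutation on $p + q < h$ elements cannot contain an increasing subsequence of length $h$. For the main case $p + q \ge h$, set $s = p + q - h$. When $s \ge 1$, Corollary~\ref{cor:Dsumtwo} lets us reduce to the case $(p, q) = (s, h)$; when $s = 0$, the only permutation in $\DD_h(p, q)$ is the identity $12 \cdots h$, matching $C(h, 0) = 1$.

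In the reduced case $(s, h)$ with $s \ge 1$, the second block is itself an increasing subsequence of length $h$, so $\mathrm{LIS}(\pi) \ge h$ automatically, giving $\DD_h(s, h) = \LL_{h+1}(s, h)$. I would encode each $(s, h)$-ascending permutation $\pi$ by the set $T \subseteq \{1, \ldots, s+h\}$ of values appearing in its first block (the second block being the complementary values read in increasing order), and translate this into a $\pm 1$-walk $w_0, w_1, \ldots, w_{s+h}$ with $w_0 = h$, an up-step at position $i$ exactly when $i \in T$, and endpoint $w_{s+h} = s$. The key claim is $\mathrm{LIS}(\pi) = \max_i w_i$: the longest increasing subsequence is obtained by fixing a threshold $t$ and taking all block-$1$ values $\le t$ followed by all block-$2$ values $> t$, which has length precisely $w_t$.

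This reduces the count to the number of $\pm 1$-walks from $h$ to $s$ with $s$ up-steps and $h$ down-steps that never exceed $h$. Reflecting the initial segment up to the first visit of $h + 1$ sets up a bijection between the bad walks and walks from $h + 2$ to $s$ of the same length, of which there are $\binom{s + h}{s - 1}$. Therefore the desired count is $\binom{s+h}{s} - \binom{s+h}{s-1} = C(h, s)$. The main obstacle is the identification $\mathrm{LIS}(\pi) = \max_i w_i$; once this is in hand, the reflection step is entirely routine.
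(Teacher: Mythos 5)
Your proof is correct, but it takes a genuinely different route from the paper's. Both arguments begin the same way, using Corollary~\ref{cor:Dsumtwo} to normalize the pair $(p,q)$ (you move to $(s,h)$ with $s=p+q-h$; the paper moves to $(h,m)$ with $m=p+q-h$, which is the same reduction up to block order). From there the paper proceeds by induction on $h+m$: it combines Lemma~\ref{lem:destriv} with the decomposition $\LL_{h+1}=\bigcup_{h'\le h}\DD_{h'}$ to derive the recurrence $\#\DD_h(h,m)=\#\DD_h(h-1,m)+\#\DD_{h-1}(h-1,m)$, and then matches this against the Pascal-type recurrence $C(h,m)=C(h,m-1)+C(h-1,m)$ for Catalan's triangle. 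You instead give a direct, non-inductive count: since the length-$h$ block forces $\mathrm{LIS}(\pi)\ge h$, you have $\DD_h(s,h)=\LL_{h+1}(s,h)$, and the encoding of $\pi$ by the value set of its first block turns the condition $\mathrm{LIS}(\pi)\le h$ into a ceiling condition on a lattice walk; your identity $\mathrm{LIS}(\pi)=\max_i w_i$ is correct (every increasing subsequence of a two-block-ascending permutation is dominated by a threshold subsequence, and the threshold subsequence at level $t$ has length exactly $w_t$), and the reflection principle then yields $\binom{s+h}{s}-\binom{s+h}{s-1}=C(h,s)$ directly. The paper's route is shorter given the machinery already in place and reuses Lemma~\ref{lem:destriv}, but it requires knowing (or verifying) the recurrence satisfied by the entries of Catalan's triangle; your route is self-contained, produces the closed form as an explicit binomial difference rather than by induction, and gives a bijective interpretation of $\DD_h(s,h)$ as a ballot-type problem, at the cost of setting up the walk encoding and proving the $\mathrm{LIS}$ identification. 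I see no gap in your argument, including the edge cases $p+q<h$ and $s=0$.
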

\begin{proof}
	Assume $p+q \ge h$, and let $m = p+q-h \ge 0$ for brevity.
	Thus by Corollary~\ref{cor:Dsumtwo}, we have
	\[ \# \DD_h(p,q) = \# \DD_h(h, m). \]
	If $m=0$ the result is clear so assume $m > 0$.
	We now prove the result by induction on $h+m$.
	From Lemma~\ref{lem:destriv} and the definition of $\LL_{h+1}$,
	\begin{align*}
		\# \LL_{h+1} (h, m) &= \# \LL_{h+1} (h-1, m). \\
		% \intertext{Expanding the definition of $\LL$ gives}
		\# \DD_h (h,m) &= \# \DD_h (h-1,m) + \# \DD_{h-1} (h-1, m) \\
		&= C(h,m-1) + C(h-1, m) = C(h,m),
	\end{align*}
	which completes the inductive step.
	(The term $\DD_{h-1}(h-1,m)$ is omitted when $m=h$.)
\end{proof}

\subsection{Young Tableaux}
We now give some examples of possible generalizations of
the cardinality computed in \cite{meiwang}.
\begin{proposition}
	For $p \le k$, the cardinality
	\[ \# \LL_{k+2}(p, \underbrace{k, k, \dots, k}_{n-1}) \]
	is equal to the number of standard Young tableaux of shape
	\[ \left< (k+1)^{n-1}, p \right>. \]
\end{proposition}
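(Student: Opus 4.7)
My plan is to proceed via the Robinson--Schensted--Knuth correspondence, in the spirit of Mei and Wang's proof in the rectangular case. First, Corollary~\ref{cor:Lswap_main} lets me reorder blocks so that without loss of generality the structure is $(k, \ldots, k, p)$ with the $p$-block last; this makes the content of the RSK recording tableau into the partition $\mu \defeq (k^{n-1}, p)$.

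Next, apply RSK to each $\pi$, yielding a pair $(P, Q)$ of standard Young tableaux of common shape $\lambda$ with $|\lambda| = N \defeq (n-1)k + p$. By Schensted's theorem, $\pi$ avoids $12\cdots(k+2)$ iff $\lambda_1 \le k+1$, and since $\pi$ ascends within each block, $D(\pi) \subseteq \{k, 2k, \ldots, (n-1)k\}$. Combined with the classical identity $D(\pi) = D(Q)$, this is equivalent to $Q$ being the standardization of an SSYT $\tilde Q$ of shape $\lambda$ with content $\mu$ (relabel entries $(i-1)k+1,\ldots,ik$ by $i$, and the final $p$ entries by $n$). Hence
\[
\#\LL_{k+2}(k, \ldots, k, p) \;=\; \sum_{\lambda} f^\lambda\, K_{\lambda, \mu},
\]
summed over $\lambda$ with $\lambda_1 \le k+1$. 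For $K_{\lambda, \mu} > 0$ one needs $\lambda \subseteq \nu \defeq ((k+1)^{n-1}, p)$: the bound $\lambda_1 \le k+1 = \nu_1$ is Schensted, while $\lambda_n \le p = \nu_n$ holds because row $n$ of any SSYT with entries in $\{1,\ldots,n\}$ consists only of $n$'s, and $\mu$ has only $p$ of them. On the SYT side, splitting a filling of $\nu$ according to whether entries are $\le N$ or $> N$ gives the branching identity $f^\nu = \sum_{\lambda \subseteq \nu,\, |\lambda|=N} f^\lambda\, f^{\nu/\lambda}$. Thus the proposition reduces to the Kostka--skew identity
\[
K_{\lambda, (k^{n-1}, p)} \;=\; f^{\nu/\lambda}
\qquad\text{for every } \lambda \subseteq \nu \text{ with } |\lambda| = N.
\]

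I expect this last identity to be the main obstacle. My approach is a direct bijection between SSYTs of shape $\lambda$ with content $\mu$ and SYTs of the skew shape $\nu/\lambda$: given $\tilde Q$, each value $i < n$ forms a horizontal strip of size $k$, occupying $k$ of the $k+1$ available columns and thus omitting a unique column $c_i$; the corresponding cell in $\nu/\lambda$ should lie in column $c_i$, with the row chosen canonically so that the $n-1$ placed labels form a valid SYT (strict along rows and columns of $\nu/\lambda$). The delicate point is formalizing this row-selection rule and verifying bijectivity; small-case checks (e.g., $n = 3$, $k = 2$, $p \in \{1, 2\}$) confirm that the identity holds term by term, which gives confidence in the approach even though the combinatorial bookkeeping of the row rule is the technical heart of a complete proof.
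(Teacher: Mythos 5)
Your route is genuinely different from the paper's. The paper first uses its own machinery (Lemma~\ref{lem:destriv} plus the reordering bijection) to pass to $\LL_{k+2}(p,k+1,\dots,k+1)$, and then gives a two-line direct bijection with standard Young tableaux of shape $\left<(k+1)^{n-1},p\right>$: stack the blocks as rows of the tableau, so that each row is automatically increasing and the $12\cdots(k+2)$-avoidance condition becomes exactly column-strictness. No RSK, no Kostka numbers. Your reduction via RSK is correct as far as it goes: the descent-set identity $D(\pi)=D(Q)$, Schensted's theorem, the standardization argument giving $\#\LL_{k+2}(k,\dots,k,p)=\sum_{\lambda}f^{\lambda}K_{\lambda\mu}$, the containment $\lambda\subseteq\nu$, and the branching identity are all sound. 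But this only trades the proposition for the identity $K_{\lambda,(k^{n-1},p)}=f^{\nu/\lambda}$, which carries essentially all of the content, and is harder than the direct argument the paper uses.

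The genuine gap is that your proposed bijection for that identity does not work, and not merely because the row-selection rule is unspecified: the column assignment itself can be inconsistent with the skew shape. Take $n=3$, $k=2$, $p=1$, so $\mu=(2,2,1)$, $\nu=(3,3,1)$, and consider $\lambda=(3,2)$, for which $\nu/\lambda$ consists of the cells $(2,3)$ and $(3,1)$ (columns $3$ and $1$). The SSYT of shape $\lambda$ with first row $1,1,3$ and second row $2,2$ has $H_1$ and $H_2$ both occupying columns $\{1,2\}$, so $c_1=c_2=3$: two labels are sent to the same column, and no label is sent to column $1$. The SSYT with rows $1,1,2$ and $2,3$ gives $c_1=3$, $c_2=2$, but $\nu/\lambda$ has no cell in column $2$. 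So the ``omitted column'' rule fails on both elements of this $K_{\lambda\mu}=2=f^{\nu/\lambda}$ instance. The identity itself appears to be true (I checked it in several cases and it is forced in aggregate by the paper's proof together with your reduction), but establishing it requires a genuinely different bijection --- e.g.\ one matching chains of horizontal strips of sizes $k,\dots,k,p$ building $\lambda$ against single-box chains from $\lambda$ to $\nu$ --- and until that is supplied your proof is incomplete at its technical heart. I would recommend either proving the Kostka--skew identity properly or switching to the paper's far more elementary stacking argument.
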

Of course, this cardinality may be computed using the hook-length formula.
\begin{proof}
	This is essentially identical to
	\cite[Proposition 3.1]{lewis}.
	By our results, it suffices to consider the cardinality of
	\[ \LL_{k+2}(p, \underbrace{k+1, k+1, \dots, k+1}_{n-1}). \]
	Given a permutation
	$\pi = \pi_{11} \pi_{12} \dots \pi_{1p} \mid \dots \mid \pi_{n1} \dots \pi_{n(k+1)}$,
	we construct a tableau as follows:
	\begin{center}
	\begin{ytableau}
		\pi_{n,1} & \pi_{n,2} & \pi_{n,3} & \none[\dots] & \pi_{n,p} & \none[\dots] & \pi_{n,k+1} \\
		\pi_{n-1,1} & \pi_{n-1,2} & \pi_{n-1,3} & \none[\dots] & \pi_{n-1,p} & \none[\dots] & \scriptstyle \pi_{n-1,k+1} \\
		\none[\vdots] & \none[\vdots] & \none[\vdots] & \none[\ddots] & \none[\vdots] & \none[\ddots] & \none[\vdots] \\
		\pi_{2,1} & \pi_{2,2} & \pi_{2,3} & \none[\dots] & \pi_{2,p} & \none[\dots] & \pi_{2,k+1} \\
		\pi_{1,1} & \pi_{1,2} & \pi_{1,3} & \none[\dots] & \pi_{1,p}
	\end{ytableau}
	\end{center}
	Obviously each row is increasing;
	then, one observes that $\pi$ has no $12\dots(k+2)$ pattern exactly if
	the tableau $T$ is a standard Young tableau
	(the columns are increasing as well).
\end{proof}

\subsection{Skew Young Tableau}
It is possible to generalize both the results above
using the concept of skew Young tableaux.
\begin{proposition}
	For $p \le q \le k$, the cardinality
	\[ \# \LL_{k+2}(p, q, \underbrace{k, k, \dots, k}_{n-2}) \]
	is equal to the number of standard skew Young tableaux of shape
	\[ \left< (k+1)^{n-1}, p \right> / \left< k+1-q \right>. \]
\end{proposition}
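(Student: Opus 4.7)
My plan follows the previous proposition closely, adapted to the skew setting. First, by Corollary \ref{cor:Lswap_main} together with repeated applications of Lemma \ref{lem:destriv} (at each step, moving an interior block of size $k$ to the front via the symmetry bijection, enlarging it to $k+1$ by the lemma, and returning it to its position), I reduce to showing that $\#\LL_{k+2}(p,k+1,\dots,k+1,q)$ equals the number of standard skew Young tableaux of the claimed shape. Here the block profile $(p, k+1, \ldots, k+1, q)$ has total length $p+(n-2)(k+1)+q$, which agrees with the cell count $(n-1)(k+1)+p-(k+1-q)$ of the skew shape.

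I then define the bijection directly: given $\pi \in \LL_{k+2}(p,k+1,\dots,k+1,q)$, place block $i$ of $\pi$ into row $n+1-i$ of the skew shape, left-justified within that row's available cells. Concretely, block $1$ (size $p$) fills columns $1,\dots,p$ of the bottom row; each middle block of size $k+1$ fills an entire row; and block $n$ (size $q$) fills columns $k+2-q,\dots,k+1$ of the top row. Since $\pi$ is block-ascending, every row of the filling is strictly increasing, so this gives a bijection with the row-increasing fillings of the skew shape by $\{1,\dots,N\}$.

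The main obstacle is to verify that column-strictness of this filling is equivalent to avoidance of $12\dots(k+2)$ in $\pi$. The easier direction uses pigeonhole: any $12\dots(k+2)$ pattern $v_1<\dots<v_{k+2}$ distributes $k+2$ values across only $k+1$ columns, so some two values $v_a<v_b$ (with $a<b$) share a column. Since $v_a$ precedes $v_b$ in $\pi$, the block index of $v_a$ is smaller, placing $v_a$ strictly below $v_b$ in the tableau and violating column strictness. For the converse, I will show that any adjacent-row column failure $T_{i,c} > T_{i+1,c}$ produces an increasing subsequence of length exactly $k+2$ in $\pi$: concatenate the entries of row $i+1$ in columns $1,\dots,c$ (from the earlier block) with the entries of row $i$ in columns $c,\dots,k+1$ (from the later block), bridging via $T_{i+1,c}<T_{i,c}$, for a total length of $c+(k+2-c)=k+2$. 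The boundary cases $i=1$ and $i=n-1$ require slight care because the top and bottom rows are truncated, but a column where both $T_{i,c}$ and $T_{i+1,c}$ are defined always lies in a range where this length computation still yields $k+2$.
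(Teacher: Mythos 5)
Your proposal is correct and follows essentially the same route as the paper: reduce via Corollary~\ref{cor:Lswap_main} and Lemma~\ref{lem:destriv} to $\LL_{k+2}(p,k+1,\dots,k+1,q)$, place the blocks as the rows of the skew shape, and check that pattern avoidance is equivalent to column-strictness. You simply spell out the two directions (pigeonhole on the $k+1$ columns, and the length-$(k+2)$ subsequence built from an adjacent-row column violation) that the paper leaves as ``in the same way as before.''
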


\begin{proof}
	By our results, it suffices to consider the cardinality of
	\[ \LL_{k+2}(p, \underbrace{k+1, k+1, \dots, k+1}_{n-2}, q). \]
	Given a permutation
	$\pi = \pi_{11} \pi_{12} \dots \pi_{1p} \mid \dots \mid \pi_{n1} \dots \pi_{nq}$,
	we write it in an array as follows:
	\begin{center}
	\begin{ytableau}
		\none & \none & \pi_{n,1} & \none[\dots] & \scriptstyle \pi_{\scriptscriptstyle n,k+1-q} & \none[\dots] & \pi_{n,q} \\
		\pi_{n-1,1} & \none[\dots] & \resizebox{2.8em}{!}{$\pi_{n-1,k+2-q}$} & \none[\dots] & \pi_{n-1,p} & \none[\dots] & \scriptstyle \pi_{n-1,k+1} \\
		\none[\vdots] & \none[\vdots] & \none[\vdots] & \none[\ddots] & \none[\vdots] & \none[\ddots] & \none[\vdots] \\
		\pi_{2,1} & \none[\dots] & \scriptstyle \pi_{2,k+1-q} & \none[\dots] & \pi_{2,p} & \none[\dots] & \pi_{2,k+1} \\
		\pi_{1,1} & \none[\dots] & \scriptstyle \pi_{1,k+2-q} & \none[\dots] & \pi_{1,p}
	\end{ytableau}
	\end{center}
	In the same way as before,
	one observes that $\pi$ has no $12\dots(k+2)$ pattern exactly if
	this tableau has increasing columns.
\end{proof}
\begin{example}
	To compute $\# \LL_8(4,5,6,6,6)$, we biject it to $\LL_{8}(4,7,7,7,5)$
	and arrange the permutations of the latter in the following fashion:
	\begin{center}
	\ytableausetup{mathmode,boxsize=2.71828459em}
	\begin{ytableau}
		\none & \none & \pi_{51} & \pi_{52} & \pi_{53} & \pi_{54} & \pi_{55} \\
		\pi_{41} & \pi_{42} & \pi_{43} & \pi_{44} & \pi_{45} & \pi_{46} & \pi_{47} \\
		\pi_{31} & \pi_{32} & \pi_{33} & \pi_{34} & \pi_{35} & \pi_{36} & \pi_{37} \\
		\pi_{21} & \pi_{22} & \pi_{23} & \pi_{24} & \pi_{25} & \pi_{26} & \pi_{27} \\
		\pi_{11} & \pi_{12} & \pi_{13} & \pi_{14}
	\end{ytableau}
	\end{center}
	Thus, $\# \LL_8(4,5,6,6,6)$ is equal to the number of
	standard Young tableaux of shape
	$\left< 7,7,7,7,4\right> / \left< 2\right>$.
\end{example}

\section*{Acknowledgments}
This research was funded by NSF grant 1659047,
as part of the 2017 Duluth Research Experience for Undergraduates (REU).
The author thanks Joe Gallian for supervising the research,
and for suggesting the problem.

The author also wishes to acknowledge
Joe Gallian, Mitchell Lee, Ben Gunby,
and the anonymous referee for their comments on drafts of the paper.
Special thanks to Ben Gunby for also pointing out the concavity result
(Theorem~\ref{thm:karamata}).

\bibliographystyle{hplain}
\bibliography{refs}

\end{document}